\providecommand{\U}[1]{\protect\rule{.1in}{.1in}}
\newtheorem{theorem}{Theorem}[section]
\newtheorem{lemma}{Lemma}[section]
\newtheorem{corollary}{Corollary}[section]
\theoremstyle{definition}
\newtheorem{definition}{Definition}[section]
\theoremstyle{remark}
\numberwithin{equation}{section}
\begin{document}
	\begin{frontmatter}
		
		\title{ A Pillai-Catalan-type problem involving Fibonacci numbers.
		}
		
		%%this line removes the date, but space is still left for it;
		%if used, remove the \vspace{-1cm}
		%\date{}
		
		%this gives the date in the form Mon 30 Jan 2012, 8:57pm;
		%if used, retain the \vspace{-1cm}
		%\date{\shortdayofweekname{\day}{\month}{\year}{ }\mydate\today}
		
		\author[]{Seyran S. Ibrahimov}
		\ead{Seyran.Ibrahimov@emu.edu.tr}
		\author[]{Nazim I. Mahmudov}
		\ead{nazim.mahmudov@emu.edu.tr}
		
		\address{Department of Mathematics, Eastern Mediterranean University, Mersin 10, 99628, T.R. North Cyprus, Turkey}
		\address{Research Center of Econophysics, Azerbaijan State University of Economics (UNEC), Istiqlaliyyat Str. 6, Baku 1001,Azerbaijan}
		
		% Latex won't make the title unless told:
		%\maketitle
		
		%%to remove the space left for date, use:

\begin{abstract}	
			This paper addresses  A Pillai-Catalan-type problem assosiated with Fibonacci numbers. Let $F_{n}$ be the Fibonacci numbers defined by the recurrence relation $F_{1}=1$, $F_{2}=1$ and $F_{n}=F_{n-1}+F_{n-2}$ for all $n
		\geq 3$. We will  find all positive integer solution to the equation $3^{x}-F_{n}2^{y}=1$ using properties of Fibonacci numbers, linear forms of logarithms and Baker-Davenport reduction method. 
		\end{abstract}

		\begin{keyword}
			Pillai's problem; Catalan's problem,  Baker-Davenport reduction technique; Fibonacci numbers
		\end{keyword}
	\end{frontmatter}
	
	\section{INTRODUCTION}
Diophantine equation theory is one of the important components of number theory with a long history that dates back to ancient times. The title of Diophantine is associated with the name Diophantus, who lived in the third century and conducted the earliest studies in this field. In general, all integer solutions to these equations are required to be found. However, Hilbert conjectured in 1900 that 

"there is no general method for solving Diophantine equations in the set of integers using finite numerical operations". 

This is the tenth of Hilbert's 23 famous problems, which had been solved in 1970 by Russian mathematician Y. Matiyasevich. Therefore, researchers construct different methods to solve specific classes of Diophantine equations. In addition, it's no wonder that several long-standing and very interesting problems related to this subject have yet to be fully resolved. One of this problems is the following equation offered by Pillai in 1936 (see, e.g.\cite{1}-\cite{2}).
	\begin{equation}\label{de}
		 m^{x}-n^{y}=c
	\end{equation}

	Pillai conjectured that equation (\ref{de}) has for fixed integer $c\geq1$ at most finitely many positive integer solutions $(m,n,x,y)$, with $x,y\geq 2$. Mihailescu provided a solution to this equation in the case where $c=1$ , famously referred to as the Catalan conjecture \cite{3}. In particular, before Pillai proposed the conjecture, Herschfeld  \cite{8},\cite{20} had analyzed a special case with $(m,n)=(2,3)$. Furthermore, it was proven by Stroeker and Tijdeman \cite{22} that equation (\ref{de}) has at most one solution for $m=2$ and $n=3$ when $c>13$.
	
	To prove our primary result, we will use lower bounds for linear forms in logarithms (Baker's theory) and a variation of the Baker-Davenport reduction method in diophantine approximation which are important tools for solving diophantine equations. Several authors, particularly in the recent decade, have used these tools to investigate several Diophantine equations involving recurrence number sequences (see,\cite{9}-\cite{16},\cite{18} eg). 
	
	In this paper, we study the Diophantine equation
	\begin{equation}\label{pi}
		3^{x}-F_{n}2^{y}=1
	\end{equation}
\begin{theorem} The only positive integer solutions to the equation (\ref{pi}) are
	\begin{align*}
		(x,y,n)=\lbrace(1,1,1),(1,1,2),(2,2,3),(2,3,1), (2,3,2),(3,1,7), (4,4,5)\rbrace.
	\end{align*}

\end{theorem}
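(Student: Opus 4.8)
The plan is to compress (\ref{pi}) into a single small linear form in four logarithms, bound all three unknowns with a Matveev-type estimate, and then bring the resulting astronomical bound down to a computable range by the Baker--Davenport reduction. Throughout write $\alpha=(1+\sqrt5)/2$, $\beta=(1-\sqrt5)/2$, so $F_n=(\alpha^n-\beta^n)/\sqrt5$ and $\alpha^{n-2}\le F_n\le\alpha^{n-1}$; after recording the seven claimed solutions one may assume $x$ (hence $n$) large, the small cases being subsumed in the final verification. The decisive preliminary is an elementary $2$-adic remark: writing (\ref{pi}) as $3^x-1=F_n2^y$ gives $2^y\mid 3^x-1$, and since the multiplicative order of $3$ modulo $2^y$ equals $2^{\,y-2}$ for $y\ge3$ (from the lifting-the-exponent identity $v_2(3^{2k}-1)=3+v_2(k)$; the claim being trivial for $y\le2$), we get $2^{y-2}\mid x$, hence $2^y\le4x$ and so $y\le2+\log_2 x$. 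Feeding this back, $F_n=(3^x-1)/2^y\ge 3^{x-1}/(2x)$, and comparison with $F_n\le\alpha^{n-1}$ forces $n>2x$ once $x$ exceeds an absolute constant. Thus $n$ is of the same order of magnitude as $x$ and $y$ is only of size $\log x$; this is exactly what makes the linear form below genuinely small.

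Set $\Lambda:=x\log3-y\log2-n\log\alpha+\log\sqrt5=\log\!\big(\sqrt5\cdot3^x/(\alpha^n2^y)\big)$. From $\sqrt5\cdot3^x=(\alpha^n-\beta^n)2^y+\sqrt5$ one computes $e^{\Lambda}-1=(\sqrt5-\beta^n2^y)/(\alpha^n2^y)$; using $\alpha^n2^y>3^x-1$ (which follows from $\alpha^n2^y=\sqrt5(3^x-1)+\beta^n2^y$ and $2^y\le4x$) together with $\alpha^{-2n}<\alpha^{-4x}<3^{-x}$ (since $\alpha^4>3$), one gets $|e^{\Lambda}-1|<c\,3^{-x}$, hence $|\Lambda|<c'\,3^{-x}$ for an absolute $c'$; a routine conjugation argument in $\mathbb{Q}(\sqrt5)$ shows $\Lambda\neq0$. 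Applying Matveev's lower bound to this linear form in $\log3,\log2,\log\alpha,\log\sqrt5$ over $\mathbb{Q}(\sqrt5)$, with height parameter $B=\max\{x,y,n\}<3x$, gives $\log|\Lambda|>-C_1(1+\log(3x))$ with $C_1$ explicit; combined with $\log|\Lambda|<\log c'-x\log3$ this yields an effective absolute bound $x<C_0$ (of order $10^{13}$--$10^{14}$), and then $n<3C_0$ and $y\le2+\log_2 C_0$.

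The final step is the reduction. Since $y$ now ranges over a small explicit set, I treat it as a parameter: for each admissible $y$, $\Lambda/\log\alpha$ has the form $x\kappa-n+\mu_y$ with $\kappa=\log3/\log\alpha$ irrational, so $|\Lambda|<c'3^{-x}$ becomes $|x\kappa-n+\mu_y|<A\cdot3^{-x}$; choosing a convergent $p/q$ of $\kappa$ with $q>6C_0$ and invoking the Dujella--Pethő lemma collapses this to $x<C_2$ with $C_2$ only a few tens (alternatively an LLL-based reduction applied directly to the three-variable form does the same). Within the finite box $x\le C_2$, $n\le3C_2$, $y\le2+\log_2 C_2$ one simply checks (\ref{pi}) and recovers exactly $(x,y,n)\in\{(1,1,1),(1,1,2),(2,2,3),(2,3,1),(2,3,2),(3,1,7),(4,4,5)\}$.

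The substantive point is the $2$-adic step: without the order-of-$3$-modulo-$2^y$ observation the exponent $y$ is controlled only linearly in $x$, $n$ need not be large, the term $\alpha^{-2n}$ in $|e^{\Lambda}-1|$ need not be negligible, and one is forced into a two-stage argument with an auxiliary three-term form involving $\log F_n$ (whose height must itself first be bounded by the first-stage estimate). With that step in hand, what remains is the standard, if computation-heavy, machinery --- pinning down the Matveev constants and carrying out the continued-fraction/LLL reduction --- which, rather than any conceptual difficulty, is where the real labour lies.
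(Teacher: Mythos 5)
Your proposal is correct and follows essentially the same route as the paper: the order of $3$ modulo $2^y$ (the paper's Corollary 2.1) to tie $y$ to $x$, Binet's formula to produce the four-term linear form in $\log 3,\log 2,\log\alpha,\log\sqrt5$ over $\mathbb{Q}(\sqrt5)$, Matveev's theorem for the absolute bound, and the Dujella--Peth\H{o} reduction with $y$ treated as a parameter before a finite computer check. The only differences are cosmetic (you bound $y$ in terms of $x$ before invoking Matveev, whereas the paper bounds $n$ first and then deduces $y<56$; you normalize the reduced form by $\log\alpha$ rather than $\log 3$), and neither affects the argument.
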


 The version of equation (\ref{pi}) for Lucas numbers has been studied in (\cite{19}). Also, following particular cases of equation (\ref{pi}) have previously been examined in (\cite{15}).
 
 	$\bullet$ $y=1$ in equation (\ref{pi}) yields the following equation:
	\begin{equation}		
	F_{n}=\frac{3^{x}-1}{2},
\end{equation}
	$\bullet$ $y=2$ and $x=2k$ in equation (\ref{pi}) yields the following equation: 
	\begin{equation}		
	F_{n}=\frac{9^{k}-1}{4},
\end{equation}
	$\bullet$ $y=3$ and $x=2k$ in equation (\ref{pi}) yields the following equation: 

	\begin{equation}		
	F_{n}=\frac{9^{k}-1}{8}.
\end{equation}

 Furthermore, it is demonstrated that in (\cite{15}) $(x,n)=\lbrace (1,1),(1,2),(3,7)\rbrace$ are only solutions of equation (1.3), $(k,n)=(1,3) $ is the unique solution of equation (1.4), and $(k,n)=\lbrace (1,2),(1,1)\rbrace$ are the only solutions of equation (1.5). Thus, it follows that
 $$(x,y,n)=\lbrace(1,1,1),(1,1,2),(2,2,3),(2,3,1), (2,3,2),(3,1,7)\rbrace$$
 
  are only solutions to the equation (\ref{pi}) for the cases $y=1,2,3.$

	\section{PRELIMINARIES}	
	This section presents some fundamental concepts that will be required in the proof of our main result.
	
	\begin{lemma} Let $t, u, l ,k$ be some positive integers then
		
		1) $3^{2t-1}-1=2A_{t}$	
		
		2) $3^{2t-1}+1=4B_{t}$
		
		3) $3^{2t}+1=2C_{t}$	
		
		4) $3^{8t-2}-1=8D_{t}$
		
		5) $3^{8t-4}-1=16F_{t}$
		
		6) $3^{8t-6}-1=8R_{t}$
		
		7) $3^{8t}-1=32S_{t}$, for all $t=2k-1$
		
		8) $3^{8t}-1=2^{l+4}Z_{t}$, for all $t=2^{l-1} u$

		Here, the odd numbers $A_{t}$, $B_{t}$, $C_{t}$, $D_{t}$, $F_{t}$,	 $R_{t}$, $S_{t}$, $K_{t}$ and positive integer $Z_{t}$ are depends the value of  $t$. 
		
		\begin{proof} We will now present the proof of the lemma's parts 1, 3, and 8. Analogously, other sections of the lemma can be demonstrated.
			\bigskip
						
			$	1) \quad  3^{2t-1}-1=2 \sum_{k=2}^{2t}3^{2t-k}=2A_{t},$
			
			\bigskip
			
			$	3) \quad  3^{2t}+1=3(2A_{t}+1)+1=2(3A_{t}+2)=2C_{t},$
			
		\bigskip
		
			If we use part 3), we obtain
				
				\bigskip
				
			$	8) \quad 3^{2^{l+2}}-1=2\prod_{k=0}^{l+1}\big(3^{2^{k}}+1\big)=2^{l+4}K_{t},$
			
		\bigskip
		
			    and
			   	
			   \bigskip
			   
			$ 3^{u\cdot 2^{l+2}}-1=(3^{2^{l+2}})^{u}-1=2^{l+4}Z_{t}.$
		\end{proof}
\end{lemma}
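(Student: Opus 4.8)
The plan is to read all eight identities as a single computation of the $2$-adic valuation $v_2(3^{m}\pm1)$; morally each is an instance of the Lifting-the-Exponent formula $v_2(3^{m}-1)=2+v_2(m)$ for even $m$, but I would give self-contained elementary proofs by factoring geometric series, exactly as the paper prefers. The one recurring device is that a sum of an odd number of odd terms is odd, which is what lets me factor a power of $2$ out of a geometric sum while certifying that the cofactor left behind is odd, so that each stated valuation is \emph{exact} and not merely a lower bound.

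First I would establish parts 1, 2, 3 as base cases by summing geometric series. For part 1 write $3^{2t-1}-1=2\sum_{j=0}^{2t-2}3^{j}$; the sum has $2t-1$ terms, an odd count of odd summands, hence is odd, giving $A_t$ odd. Part 3 then follows immediately by substituting part 1: $3^{2t}+1=3(2A_t+1)+1=2(3A_t+2)$, and $3A_t+2$ is odd since $A_t$ is. Part 2 is analogous after reducing $3^{2t-1}$ modulo $8$ (equivalently, by grouping the geometric series in pairs), which pins down the valuation as exactly $2$.

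Next I would handle the parts whose exponent carries a fixed $2$-adic content, namely 4, 5, 6. Each exponent factors as $8t-2=2(4t-1)$, $8t-4=4(2t-1)$, $8t-6=2(4t-3)$, so I would apply the difference-of-squares factorization $3^{2m}-1=(3^{m}-1)(3^{m}+1)$ and feed the factors into parts 1--3 to read off the exact power of $2$. Since $v_2(3^{m}-1)=2+v_2(m)$ for even $m$, and the three prescribed exponents have $v_2$ equal to $1,2,1$ respectively, this produces the claimed $8=2^{3}$, $16=2^{4}$, $8=2^{3}$.

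Finally, parts 7 and 8 isolate the power-of-two part of the exponent. The key identity is the telescoping product $3^{2^{r}}-1=(3-1)\prod_{k=0}^{r-1}\bigl(3^{2^{k}}+1\bigr)$: the single factor $3^{2^{0}}+1=4$ contributes $2^{2}$, while every factor with $k\ge1$ contributes exactly one factor of $2$ (by the part-3 argument) with an odd cofactor. Writing $t=2^{l-1}u$ with $u$ odd gives $8t=2^{l+2}u$, and after extracting $3^{2^{l+2}}-1$ (the case $r=l+2$) I would strip the odd exponent $u$ using $x^{u}-1=(x-1)(x^{u-1}+\cdots+1)$, whose second factor is a sum of $u$ odd terms and hence odd; this yields valuation $l+4$, and part 7 is the special case $l=1$. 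The main obstacle throughout is bookkeeping: tracking precisely the one extra factor of $2$ coming from $3+1=4$ versus the plain $3^{2^{k}}+1$ for $k\ge1$, and verifying at each factorization step that the cofactor stays odd so that the stated valuation is exact.
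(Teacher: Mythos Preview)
Your proposal is correct and follows essentially the same approach as the paper: geometric-series summation for part~1, substitution into part~1 for part~3, and the telescoping product $3^{2^{r}}-1=(3-1)\prod_{k=0}^{r-1}(3^{2^{k}}+1)$ for part~8, with part~7 as the $l=1$ case. The paper only sketches parts 1, 3, 8 and omits the oddness verifications; your version is more complete in that it supplies the ``odd count of odd summands'' argument for the cofactors and outlines parts 2, 4--7 via difference-of-squares and the LTE-style bookkeeping, but the underlying mechanism is identical.
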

As the aforementioned references demonstrate, solving the kind of diophantine equation we studied in this article typically requires many computations. However, in this work, it is simpler for us to ascertain effective bounds for the variables in equation (1.2) thanks to the use of Lemma 2.1 and its following result: 

\begin{corollary}(\cite{4,21})
	 If $3^{t}-1\equiv0\quad(\bmod2^{l})$ and $l\geq 3$ then $t_{min}=2^{l-2}$ and $t=2^{l-2}u, \quad u\geq1$.
\end{corollary}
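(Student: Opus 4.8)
The corollary asserts precisely that the multiplicative order of $3$ modulo $2^{l}$ equals $2^{l-2}$ for every $l\geq 3$, so the plan is to pin down the exact $2$-adic valuation $v_{2}(3^{t}-1)$ as a function of $t$ by means of Lemma 2.1, and then read off the statement.

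First I would rule out $t$ odd: writing $t=2s-1$, part 1) of Lemma 2.1 gives $3^{t}-1=2A_{t}$ with $A_{t}$ odd, hence $v_{2}(3^{t}-1)=1<l$, contradicting $2^{l}\mid 3^{t}-1$. So $t$ is even; write $t=2^{a}m$ with $m$ odd and $a\geq 1$, and use the telescoping factorisation $3^{t}-1=(3^{m}-1)\prod_{j=0}^{a-1}\bigl(3^{2^{j}m}+1\bigr)$. Part 1) applied to the odd exponent $m$ gives $v_{2}(3^{m}-1)=1$; part 2) gives $v_{2}(3^{m}+1)=2$; and since for $j\geq 1$ the exponent $2^{j}m$ is twice a positive integer, part 3) gives $v_{2}(3^{2^{j}m}+1)=1$. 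Summing the contributions yields $v_{2}(3^{t}-1)=1+2+(a-1)=a+2$. (This exact count is what parts 4)--8) of Lemma 2.1 record piecemeal: the cases $a=1$ and $a=2$, the case $8\mid t$ with $t/8$ odd, and the general case $t=2^{l-1}u$.)

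Finally, $2^{l}\mid 3^{t}-1$ is equivalent to $v_{2}(3^{t}-1)=a+2\geq l$, i.e.\ $a\geq l-2$, i.e.\ $2^{l-2}\mid t$; hence $t=2^{l-2}u$ for some integer $u\geq 1$, and the smallest admissible value is $t_{\min}=2^{l-2}$. The one place that needs genuine care is the assertion that every factor $3^{2^{j}m}+1$ with $j\geq 1$ is divisible by $2$ but not by $4$ (equivalently $3^{2}\equiv1\pmod 8$): this is exactly what caps $v_{2}(3^{t}-1)$ at $a+2$ from above and so delivers the ``only if'' direction; the remainder of the argument is bookkeeping with Lemma 2.1.
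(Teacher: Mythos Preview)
Your proof is correct, and in fact more explicit than the paper's own argument, but it follows a genuinely different route. The paper proceeds via the multiplicative order: it first invokes Euler's theorem to get $3^{2^{l-1}}\equiv 1\pmod{2^{l}}$, so the order $d$ of $3$ modulo $2^{l}$ divides $2^{l-1}$ and is therefore a power of $2$; it then appeals (rather tersely) to part 3) of Lemma 2.1 to see that the smaller powers $2^{k}$ with $k<l-2$ fail, yielding $d=t_{\min}=2^{l-2}$ and hence $d\mid t$. You instead bypass Euler's theorem entirely and compute the exact $2$-adic valuation $v_{2}(3^{t}-1)=a+2$ for $t=2^{a}m$ with $m$ odd, via the telescoping factorisation together with parts 1), 2), 3) of Lemma 2.1. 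Your approach is more self-contained (it is essentially a hands-on proof of the relevant case of the Lifting the Exponent lemma) and delivers both directions and the full description $t=2^{l-2}u$ in one stroke; the paper's approach is shorter but leans on the standard machinery of orders, and leaves the final deduction from part 3) to the reader.
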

\begin{proof}
	
	By Euler's theorem we have:
	\begin{align*}
		3^{2^{l-1}}-1\equiv0\quad(\bmod2^{l})
	\end{align*}
Furthermore, we are aware that if $d$ is the smallest positive integer such that

\begin{align*}
	3^{d}-1\equiv0\quad(\bmod2^{l})
\end{align*}
then
\begin{align}
	d|2^{l-1}
\end{align}
 
 Moreover, by applying property (2.1) and part 3 of Lemma 2.1, we determine that  $t_{min}=2^{l-2}$.

\end{proof}
\begin{lemma}\textbf{(Binet's formula)}(\cite{5})
	 The nth term of the Fibonacci sequence can be determine with the following formula:
	
	\begin{align*}		
		F_{n}=\frac{(\alpha)^{n}-(-\alpha)^{-n} }{\sqrt5}, \quad n\geq1
	\end{align*}
where
\begin{align*}
	 \alpha=\frac{1+\sqrt5}{2}.
\end{align*}
\end{lemma}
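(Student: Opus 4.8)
The plan is to prove Binet's formula by strong induction on $n$, using the fact that $\alpha$ and its algebraic conjugate $\beta := \frac{1-\sqrt5}{2}$ are precisely the two roots of the characteristic polynomial $x^2-x-1$ of the Fibonacci recurrence. First I would record the elementary identities that drive the whole argument: both roots satisfy $\alpha^2=\alpha+1$ and $\beta^2=\beta+1$, their sum is $\alpha+\beta=1$, their difference is $\alpha-\beta=\sqrt5$, and their product is $\alpha\beta=\frac{(1+\sqrt5)(1-\sqrt5)}{4}=-1$. From $\alpha\beta=-1$ we obtain $\beta=-\alpha^{-1}$, hence $\beta^n=(-1)^n\alpha^{-n}=(-\alpha)^{-n}$. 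Thus the stated formula $F_n=\frac{\alpha^n-(-\alpha)^{-n}}{\sqrt5}$ is merely the familiar identity $F_n=\frac{\alpha^n-\beta^n}{\sqrt5}$ rewritten, and it suffices to prove the latter.

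I would then define $G_n:=\frac{\alpha^n-\beta^n}{\sqrt5}$ and show $G_n=F_n$ for all $n\geq1$. For the base cases, direct computation gives $G_1=\frac{\alpha-\beta}{\sqrt5}=\frac{\sqrt5}{\sqrt5}=1=F_1$ and $G_2=\frac{\alpha^2-\beta^2}{\sqrt5}=\frac{(\alpha+\beta)(\alpha-\beta)}{\sqrt5}=\alpha+\beta=1=F_2$. For the inductive step, assuming $G_{n-1}=F_{n-1}$ and $G_{n-2}=F_{n-2}$ for some $n\geq3$, I would use $\alpha^n=\alpha^{n-2}\alpha^2=\alpha^{n-2}(\alpha+1)=\alpha^{n-1}+\alpha^{n-2}$ together with the identical relation for $\beta$, subtract the two, and divide by $\sqrt5$ to get $G_n=G_{n-1}+G_{n-2}=F_{n-1}+F_{n-2}=F_n$, closing the induction.

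Since this is a classical identity, there is no genuine obstacle; the only delicate points are bookkeeping. The most error-prone step is the identification $\beta^n=(-\alpha)^{-n}$, where one must track the sign $(-1)^n$ carefully and note that $(-1)^{-n}=(-1)^n$; writing $\beta=-\alpha^{-1}$ explicitly before raising to the $n$th power removes any ambiguity. I would also remark that this is just the general theory of second-order linear recurrences with constant coefficients in action: positing $F_n=A\alpha^n+B\beta^n$ and solving the $2\times2$ linear system from $F_1=F_2=1$ forces $A=\frac{1}{\sqrt5}$ and $B=-\frac{1}{\sqrt5}$, yielding a second, non-inductive derivation of the same coefficients as an independent check.
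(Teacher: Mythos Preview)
Your argument is correct: the identities $\alpha+\beta=1$, $\alpha-\beta=\sqrt5$, $\alpha\beta=-1$ are accurate, the rewriting $\beta^{n}=(-\alpha)^{-n}$ is justified exactly as you say, the base cases $G_1=G_2=1$ check out, and the inductive step follows cleanly from $\alpha^{2}=\alpha+1$ and $\beta^{2}=\beta+1$. The alternative derivation via solving for $A,B$ in $F_n=A\alpha^{n}+B\beta^{n}$ is also standard and correct.

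As for comparison with the paper: there is nothing to compare. The paper states Binet's formula as a lemma with a citation to Koshy's book and gives no proof whatsoever; it is treated as a classical fact to be quoted. So you have not deviated from the paper's approach so much as supplied one where the paper chose to omit it. If your goal is to match the paper, a one-line reference suffices; if your goal is a self-contained write-up, what you have is the canonical proof and needs no change.
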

Using the Binet's formula, we can straightforwardly derive the following inequalities

\begin{corollary}(\cite{5})
	\begin{align*}
		\alpha^{n-2}\leq F_{n} \leq \alpha^{n-1},\quad n\geq1.
		\end{align*}
\end{corollary}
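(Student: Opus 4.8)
The plan is to prove both inequalities simultaneously by strong induction on $n$, using only the recurrence $F_n = F_{n-1} + F_{n-2}$ together with the single algebraic fact that $\alpha = (1+\sqrt5)/2$ is a root of $x^2 = x + 1$, so that $\alpha^2 = \alpha + 1$. This identity is precisely what converts the additive Fibonacci recurrence into the multiplicative growth rate $\alpha$, and it is the engine of the whole argument. (Binet's formula, Lemma 2.2, is not even needed for the induction, although I indicate below how it gives an alternative route.)

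First I would settle the base cases. Since the recurrence ties each term to its two predecessors, two of them are needed, $n=1$ and $n=2$. For $n=1$ we have $F_1 = 1$ and must check $\alpha^{-1} \leq 1 \leq \alpha^{0} = 1$; the right-hand inequality is an equality and the left holds because $\alpha > 1$. For $n=2$ we have $F_2 = 1$ and must check $\alpha^{0} = 1 \leq 1 \leq \alpha^{1} = \alpha$, which again holds since $\alpha > 1$. With the anchors in place I would fix $n \geq 3$ and assume the bounds $\alpha^{m-2} \leq F_m \leq \alpha^{m-1}$ for all $1 \leq m < n$.

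For the inductive step I would treat the two bounds separately but in identical fashion. For the upper bound,
\[
F_n = F_{n-1} + F_{n-2} \leq \alpha^{n-2} + \alpha^{n-3} = \alpha^{n-3}(\alpha + 1) = \alpha^{n-3}\alpha^{2} = \alpha^{n-1},
\]
and for the lower bound,
\[
F_n = F_{n-1} + F_{n-2} \geq \alpha^{n-3} + \alpha^{n-4} = \alpha^{n-4}(\alpha + 1) = \alpha^{n-4}\alpha^{2} = \alpha^{n-2},
\]
so both bounds propagate to $n$ and the induction closes.

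There is no serious obstacle here; the argument is a textbook induction, and the only care required is organisational. One must remember that the two-term recurrence forces two base cases rather than one, and one must be untroubled by the negative exponent $\alpha^{n-4}$ that appears in the lower-bound step for small $n$ (it is harmless, since $\alpha > 0$). As an alternative I could argue directly from Binet's formula by writing $\beta = -1/\alpha$ and estimating $F_n = (\alpha^{n} - \beta^{n})/\sqrt5$; but there the sign of $\beta^{n}$ alternates with the parity of $n$, so the estimates would split into cases, making the induction the cleaner route.
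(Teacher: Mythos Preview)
Your induction is correct and complete: the two base cases are verified, and the step uses only the recurrence together with $\alpha^{2}=\alpha+1$, which is exactly the right identity. No gaps.

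The paper itself does not spell out a proof of this corollary; it merely cites \cite{5} and remarks that the inequalities can be ``straightforwardly derived'' from Binet's formula (Lemma~2.2). So your route is genuinely different from what the paper hints at: you bypass Binet entirely and run a two-term induction on the recurrence. As you note yourself, the Binet route would require splitting on the parity of $n$ to control the sign of $(-\alpha)^{-n}$, so your argument is the more economical of the two. Either approach is standard for this textbook estimate.
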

  We shall now provide a few basic concepts from algebraic number theory.

Let $z$ be an algebraic number of degree d with minimal polynomial

	\begin{align*}
		a_{0}x^{d}+a_{1}x^{d-1}+...+a_{d}=a_{0}\prod_{i=1}^{d}(X-z^{(i)})
	\end{align*}
where the $a_{1}$, $a_{2}$,..., $a_{d}$ are relatively prime integers with $a_{0}> 0$ and $z^{(1)}$, $z^{(2)}$,...,  $z^{(d)}$  are conjugates of $z$. 

\begin{definition}
	The logarithmic height of $z$ is defined by
	\begin{align*}
		h(z)=\frac{1}{d}\bigg(\log a_{0}+\sum_{i=1}^{d} \log\big(\max\lbrace \vert z^{(i)}\vert,1\rbrace\big)\bigg)
	\end{align*}
\end{definition}
Additionally, we present the Matveev`s theorem, which gives a large upper bound for the variable $n$ in the equation (\ref{pi}).
\begin{theorem}(\textbf{Matveev`s Theorem})(\cite{6})
	Assume that $\gamma_{1},\dots, \gamma_{n}$ are positive algebraic numbers in a real algebraic number field $\mathbb{L}$ of degree $d_{\mathbb{L}}, r_{1},\dots, r_{n}$ are rational integers, and 
	\begin{align*}
		\Lambda := \gamma_{1}^{r_{1}}\dots \gamma_{n}^{r_{n}}-1,
	\end{align*} 
is not zero. Hence 
\begin{align}
	\vert \Lambda\vert > \exp \bigg(-1.4 \cdot 30^{n+3} \cdot n^{4.5} \cdot d_{\mathbb{L}}^{2}(1+\log d_{\mathbb{L}})(1+\log T)A_{1}\dots A_{n}\bigg),
\end{align}
where $T\geq \max\lbrace \vert r_{1}\vert ,\dots, \vert r_{n}\vert\rbrace$, and $A_{j}\geq \max\lbrace d_{\mathbb{L}}h(\gamma_{j}), \vert \log \gamma_{j}\vert, 0.16\rbrace$, for all $j=1,\dots,n$.
\end{theorem}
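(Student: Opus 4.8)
The statement is Matveev's lower bound for linear forms in logarithms, and any proof I would give must follow the architecture of Baker's method in the refined form due to Matveev. The plan is first to pass from the multiplicative quantity $\Lambda=\gamma_{1}^{r_{1}}\cdots\gamma_{n}^{r_{n}}-1$ to the additive linear form $\Lambda'=r_{1}\log\gamma_{1}+\cdots+r_{n}\log\gamma_{n}$. Since the $\gamma_{j}$ are positive and lie in a real field, the logarithms are real, and the elementary estimate $|\Lambda|=|e^{\Lambda'}-1|\geq\tfrac{1}{2}|\Lambda'|$ (valid once $|\Lambda'|$ is small) reduces the theorem to a lower bound for $|\Lambda'|$. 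I would then argue by contradiction: assuming $|\Lambda'|<\exp(-C)$ with $C$ the right-hand exponent, the goal is to manufacture a nonzero algebraic number that is simultaneously too small analytically and too large arithmetically.

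The engine is an auxiliary function. Using Siegel's lemma over the field $\mathbb{L}$ I would construct a nonzero polynomial $P$ in $n$ variables with rational-integer coefficients of controlled height so that the entire function
\[
\Phi(z)=\sum_{\lambda} p(\lambda)\,\gamma_{1}^{\lambda_{1}z}\cdots\gamma_{n}^{\lambda_{n}z}
\]
vanishes, together with suitable derivatives, to prescribed high order at a block of consecutive integers $z=0,1,\dots,L$. The number of unknowns, the vanishing order, and the interpolation length are the free parameters whose eventual optimization fixes the final constant; the near-vanishing of $\Lambda'$ is what lets one eliminate the last logarithm and express $\Phi$ in terms of $\log\gamma_{1},\dots,\log\gamma_{n-1}$, so the smallness hypothesis is built into the construction.

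Next comes extrapolation: the hypothesis that $|\Lambda'|$ is minuscule forces $\Phi$ and its low-order derivatives to be extremely small not only on the original block but over a much longer range of integers, which I would establish by the maximum-modulus principle (a Schwarz-lemma estimate on a large disc, controlling the growth of $\Phi$ against its many near-zeros). This yields a strong analytic upper bound. To turn it into a contradiction I would invoke a zero estimate—a multiplicity estimate on the algebraic group $\mathbb{G}_{m}^{n}$ in the style of Philippon, in Matveev's sharpened combinatorial form: so much vanishing is impossible unless $P$ is degenerate, contradicting its construction, or unless the $\gamma_{j}$ satisfy a multiplicative dependence that the Liouville (arithmetic) inequality forbids. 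It is precisely this arithmetic step, built from the logarithmic heights $h(\gamma_{j})$, that produces the factors $A_{1}\cdots A_{n}$ in the exponent.

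The genuinely hard part, and the whole point of Matveev's contribution, is not the existence of \emph{some} bound but the near-optimal shape of the constant: the singly-exponential dependence $30^{n+3}n^{4.5}$ on the number of logarithms rather than a doubly-exponential one. Achieving this requires Matveev's refinements—most notably a Kummer-type descent that controls the interaction between the number of variables and the field degree, together with a very economical zero estimate and a delicate optimization of all parameters so that the unavoidable losses at each extrapolation step accumulate only linearly in the exponent of $n$. I expect this parameter bookkeeping, rather than any single analytic or algebraic step, to be the principal obstacle.
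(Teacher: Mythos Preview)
The paper does not prove this statement at all: Matveev's theorem is quoted as Theorem~2.1 with a citation to \cite{6} and is used purely as a black box in the proof of Theorem~1.1. There is therefore nothing to compare your argument against in this paper.

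That said, your sketch is a faithful high-level outline of Matveev's actual proof in \cite{6}: the reduction from $\Lambda$ to the linear form in logarithms, the construction of an auxiliary function via Siegel's lemma, extrapolation by a Schwarz-type argument, a multiplicity/zero estimate on $\mathbb{G}_{m}^{n}$, and the Kummer descent that yields the singly-exponential dependence on $n$. As an outline it is accurate, but be aware that each of these steps---especially the zero estimate and the parameter optimization that produces the explicit constant $1.4\cdot 30^{n+3}\cdot n^{4.5}$---occupies many pages in Matveev's paper, so what you have written is a proof \emph{plan} rather than a proof. For the purposes of this paper no proof is expected; a citation suffices.
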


Furthermore, we present the following valuable lemma, which was demonstrated by Dujella and Pethő (\cite{7}) and is a variation of a lemma of Baker and Davenport (\cite{17}). We use it to reduce the upper bound for the variable $n$ in the equation (\ref{pi}). For real number $y$, we put $\Vert y \Vert$=$\min\lbrace\vert y-n\vert: n\in Z\rbrace$, the distance from $y$ to the nearest integer.
\begin{lemma}
	Let $M$ be a positive integer, $\frac{p}{q}$ be a convergent of the continued fraction of the irrational $\gamma$ such that $q>6M$, and let $A,B,\mu$ be some real numbers with $A>0$ and $B>0$. If $\varepsilon=\Vert \mu q \Vert-M\Vert \gamma q\Vert>0$, hence there is no solution to the inequality 
	\begin{align*}
		0<\vert u\gamma-v+\mu\vert<AB^{-\omega},
	\end{align*}
in positive integers $u,v$ and $\omega$ with 
\begin{align*}
	u\leq M\quad and \quad \omega\geq \frac{\log(\frac{Aq}{\varepsilon})}{\log B}.
\end{align*}
\end{lemma}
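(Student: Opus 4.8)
The plan is to argue by contradiction. Suppose some triple of positive integers $(u,v,\omega)$ with $u\leq M$ and $\omega\geq \log(Aq/\varepsilon)/\log B$ satisfies $0<|u\gamma-v+\mu|<AB^{-\omega}$. First I would rewrite the lower bound on $\omega$: since $B>1$ (as is implicit in the intended application), the condition $\omega\geq \log(Aq/\varepsilon)/\log B$ is equivalent to $B^{\omega}\geq Aq/\varepsilon$, hence $AB^{-\omega}\leq \varepsilon/q$. Combined with the assumed inequality this gives the clean estimate $|u\gamma-v+\mu|<\varepsilon/q$, i.e. $|q(u\gamma-v+\mu)|<\varepsilon$. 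Because $\varepsilon=\|\mu q\|-M\|\gamma q\|\leq \|\mu q\|\leq \tfrac12$, and in fact $\varepsilon<\tfrac12$ (as $\gamma$ is irrational, so $\|\gamma q\|>0$), the real number $q(u\gamma-v+\mu)$ lies in $(-\tfrac12,\tfrac12)$. Thus its nearest integer is $0$, and $\|q(u\gamma-v+\mu)\|=|q(u\gamma-v+\mu)|<\varepsilon$.

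The core computation then exploits that $p/q$ is a convergent. Since $q>6M\geq 6>2$, one has $|q\gamma-p|<1/q<\tfrac12$, so $p$ is the nearest integer to $q\gamma$ and $\eta:=q\gamma-p$ satisfies $|\eta|=\|\gamma q\|$. I would expand $q(u\gamma-v+\mu)=u(p+\eta)-qv+q\mu=(up-qv)+u\eta+q\mu$. The quantity $r:=up-qv$ is an integer, and subtracting an integer does not change the distance to the nearest integer, so $\|q\mu+u\eta\|=\|q(u\gamma-v+\mu)-r\|=\|q(u\gamma-v+\mu)\|<\varepsilon$. On the other hand, the reverse triangle inequality for the distance-to-nearest-integer function, namely $\|a+b\|\geq \|a\|-\|b\|\geq \|a\|-|b|$, applied with $a=q\mu$ and $b=u\eta$ gives $\|q\mu+u\eta\|\geq \|\mu q\|-|u\eta|=\|\mu q\|-u\|\gamma q\|\geq \|\mu q\|-M\|\gamma q\|=\varepsilon$, where I used $0<u\leq M$.

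These two bounds contradict each other, $\varepsilon\leq \|q\mu+u\eta\|<\varepsilon$, so no such solution $(u,v,\omega)$ can exist, which proves the lemma. The two steps I would treat most carefully are (i) the claim that the nearest integer to $q(u\gamma-v+\mu)$ is exactly $0$, which rests on $\varepsilon<\tfrac12$ and converts the analytic hypothesis into a statement about $\|\cdot\|$, and (ii) the passage from $\|q(u\gamma-v+\mu)\|$ to $\|q\mu+u\eta\|$, which uses the integrality of $r=up-qv$ together with the reverse triangle inequality. I expect no genuine obstacle beyond bookkeeping: the hypothesis $q>6M$ does not enter the contradiction directly but guarantees that $p$ is the nearest integer to $q\gamma$ and that $M\|\gamma q\|\leq M/q$ is small enough for $\varepsilon=\|\mu q\|-M\|\gamma q\|$ to be positive in practice, which is the single assumption the whole argument truly hinges on.
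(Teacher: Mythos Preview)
Your argument is correct and is precisely the standard proof of this reduction lemma (originating with Baker--Davenport and formulated in this version by Dujella and Peth\H{o}). The paper itself does not supply a proof of this lemma at all; it merely quotes the statement and cites \cite{7}, so there is no ``paper's own proof'' to compare against. Your write-up matches what one finds in the original source: multiply through by $q$, use that $p$ is the nearest integer to $q\gamma$ so that $|q\gamma-p|=\Vert q\gamma\Vert$, subtract the integer $up-qv$, and apply the reverse triangle inequality for $\Vert\cdot\Vert$ to reach the contradiction $\varepsilon\leq\Vert q\mu+u\eta\Vert<\varepsilon$.

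One small remark: you correctly flag that the hypothesis $B>1$ is tacitly assumed (the stated $B>0$ is not enough for the step $\omega\geq\log(Aq/\varepsilon)/\log B\Rightarrow AB^{-\omega}\leq\varepsilon/q$). This is a defect of the lemma's statement as quoted in the paper, not of your proof, and in every application in the paper one has $B=\alpha>1$.
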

\section{ Proof of Theorem 1.1.}

	First, we examine the equations 
	$F_{n}=\frac{3^{2k-1}-1}{4}$ and $F_{n}=\frac{3^{2k-1}-1}{8}$. We can quickly determine that there is no solution to either equation by applying Lemma 2.1 (part 1). Also, Herschfeld (see, e.g \cite{8},\cite{20}) has already solved the equation $3^{x}-2^{y}=1$ for cases $n=1, 2$ and the only solutions to this equation are $(x,y)=\lbrace(1,1),(2,3)\rbrace.$
	
	Then, we assume that $y\geq 4$ and $n\geq 3$. By using Corollary 2.1, we obtain $x\geq2^{y-2}\geq y$. Then we get, $n\geq y$. By utilizing Corollary 2.2, we have
	\begin{align*}
			x<n\frac{\log \alpha}{\log 3}+y\frac{\log 2}{\log 3}<n\bigg(\frac{\log \alpha+\log 2}{\log 3}\bigg),  
		\end{align*}		
		which implies
		\begin{equation}
		 x<1.1n.
	\end{equation}

We can now express equation (\ref{pi}) as follows by using Binet's formula:
\begin{align*}
	&3^{x}-F_{n}2^{y}=3^{x}-\frac{\alpha^{n}-(-\alpha)^{-n}}{\sqrt{5}} 2^{y}=1,\\
\end{align*}
	After taking the modules on both sides of the last equality and performing the required computations, we acquire
	\begin{align*}
	&\bigg\vert 3^{x}-\frac{\alpha^{n}}{\sqrt{5}}2^{y}\bigg\vert \leq 1+2^{y-1}.\\
\end{align*}
	Dividing both sides of the above inequality by $\frac{\alpha^{n}}{\sqrt{5}}2^{y}$ we get
	\begin{equation}
	\Lambda :=\vert 3^{x}\sqrt 5 \alpha^{-n}2^{-y}-1\vert< \frac{\sqrt{5}}{\alpha^{n}}.
\end{equation}
Next, let us apply Matveev's theorem  to the left side of the above inequality with $\gamma_{1}=3, \gamma_{2}=\sqrt{5},\gamma_{3}=\alpha, \gamma_{4}=2$ and $r_{1}=x, r_{2}=1, r_{3}=-n, r_{4}=-y$. Since $\gamma_{1}, \gamma_{2}, \gamma_{3}$ and $\gamma_{4}$ belongs to real quadratic number field $\mathbb{L}$=$\mathbb{Q}(\sqrt{5})$ we take $d_{\mathbb{L}}=2$. It is obvious that $\Lambda$ is nonzero. If $\Lambda=0$ then we have that $\frac{3^{x}}{2^{y}}=\frac{\alpha^{n}}{\sqrt{5}}$, which indicates that the right hand side, $\frac{\alpha^{n}}{\sqrt{5}}\in\mathbb{Q}$, however this is a contradiction. Since $h(\gamma_{1})=\log{3}, h(\gamma_{2})=\frac{1}{2}\log{5}, h(\gamma_{3})=\frac{1}{2}\log{\alpha}, h(\gamma_{4})=\log{2}$ we take $A_{1}=2\log{3}, A_{2}=\log{5}, A_{3}=\log{\alpha}, A_{4}=2\log{2}$. In addition, since $x<1.1n$, we take $T=1.1n$. 

Moreover, by combining (2.2) and (3.2), we have the inequality that follows

\begin{align*}
	&\frac{\sqrt{5}}{\alpha^{n}}>\exp \bigg(-1.4\cdot 30^{7} \cdot 2^{13} (1+\log{2})(1+\log{(1.1n)})\log{2}\log{3}\log{5}\log{\alpha})\bigg),\\
\end{align*}
A brief calculation reveals that
\begin{align*}
	&n<1.5\cdot 30^{7} \cdot 2^{13} (1+\log{2})(1+\log{(1.1n)})\log{2}\log{3}\log{5},\\
\end{align*}
Thus we have that
\begin{equation}
	n<2.16\cdot 10^{16}.
\end{equation}
Corollary 2.1, in combination with inequalities (3.1) and (3.3), states that
\begin{align*}
 &2^{y-2}<1.1\cdot 2.16\cdot 10^{16}.\\
\end{align*}
  Therefore, we conclude that $y<56$.

   Now, we reduce the upper bound of $n$ by applying Lemma 2.3.
   Let
\begin{equation}
	\Gamma :=n\log{\alpha}-x\log{3}+\bigg(y\log{2}-\log{\sqrt{5}}\bigg)
\end{equation}
So that
 \begin{align*}
 	\Lambda :=	\vert\exp(\Gamma) -1\vert< \frac{\sqrt{5}}{\alpha^{n}}<\frac{1}{2}, \quad for\quad\forall n\geq 3.
 \end{align*}
and we have
\begin{align*}
	2\vert z\vert>\vert\log(1+z)\vert ,\quad \forall z\in \bigg(-\frac{1}{2},\frac{1}{2}\bigg)
\end{align*}
Then, applying the last inequality with $z=exp(\Gamma) -1$, we get
\begin{align}
	\vert\Gamma \vert<\frac{2\cdot\sqrt{5}}{\alpha^{n}}
\end{align}
  
 Therefore, from (3.4) and (3.5),
\begin{align*}
	\bigg\vert n\frac{\log{\alpha}}{\log{3}}-x+\bigg(y\frac{\log{2}}{\log{3}}-\frac{\log{\sqrt{5}}}{\log{3}}\bigg)\bigg\vert<\frac{5}{\alpha^{n}}.
\end{align*}

Now, we take $M=2,16\cdot10^{16}$ and $\gamma=\frac{\log{\alpha}}{\log{3}}$.
\bigskip

Then in the continued fraction expansion of $\gamma$, we take $q_{41}$, the denominator of the $41th$ convergent of $\gamma$, which exceeds $6M$. Now, we use Mathematica program to calculate
\begin{align*}
	\varepsilon_{y}:=\bigg\Vert\bigg(y\frac{\log{2}}{\log{3}}-\frac{\log{\sqrt{5}}}{\log{3}}\bigg)q_{41}\bigg\Vert-2.16\cdot 10^{16}\bigg\Vert \frac{\log{\alpha}}{\log{3}}q_{41}\bigg\Vert,
\end{align*}

for each $y\in \lbrace{4,5,...,56}\rbrace$ where $q_{41}=1116972345258589541$ and we get that

$$\varepsilon_{y}>0.0296-0.02=0.0096,\quad y\in \lbrace{4,5,...,56}\rbrace.$$

Let 
 $ A:=5,\quad B=:\alpha>1\quad and \quad \omega=:n $. Then from Lemma 2.3, we find that
 
 \begin{align*}
 	n<\frac{\log{\bigg(\frac{5\cdot 1116972345258589541}{0.0096}\bigg)}}{\log{\alpha}}<94.
 \end{align*}

Hence, $x<1.1n<103$. We determine that possible solutions to equation (\ref{pi}) satisfy the following inequalities: 
\begin{equation}
	\begin{cases}
		3<y<56,\\
		x<103,\\
		2<n<94.
	\end{cases}
\end{equation}
A run using Mathematica program revealed that the only positive integer solution to equation (\ref{pi}) with the conditions (3.6) is
\begin{align*}
	(x,y,n)=(4,4,5).
\end{align*}
The proof is completed.
\bigskip

\end{document}